%% file: main_2.tex
\documentclass[10pt, english,oneside,reqno]{amsart}
\usepackage[utf8]{inputenc}
\usepackage[english]{babel}
\usepackage{hyperref}
\usepackage{color}
\usepackage{enumerate}
\usepackage{fancyhdr}

\usepackage{amsmath}
\usepackage{amscd}
\usepackage{amsfonts}
\usepackage{amsthm}
\usepackage{amstext}
\usepackage{amssymb}
\usepackage{amsbsy}
\usepackage{mathrsfs}
\usepackage{tikz-cd}
\usepackage{mathtools}
\usepackage{enumitem}
\newtheorem*{corollary*}{Corollary}

\usepackage{float}
\usepackage{caption}
\usepackage{subcaption}
\usepackage{graphicx}

\usepackage[alphabetic,abbrev]{amsrefs}

\usepackage{url}

\theoremstyle{plain}
\newtheorem{theorem}{Theorem}[section]

\newtheorem{proposition}[theorem]{Proposition}
\newtheorem{corollary}[theorem]{Corollary}
\newtheorem{lemma}[theorem]{Lemma}

\theoremstyle{remark}

\newtheorem{remark}[theorem]{Remark}
\numberwithin{equation}{section}
\numberwithin{figure}{section}

\definecolor{esperance}{rgb}{0.1,0.5,0.3}

\newcommand{\Prob}{\mathbb{P}}

\newcommand{\Q}{\mathbb Q}
\newcommand{\Z}{\mathbb Z}
\newcommand{\cP}{\mathcal P}

\hypersetup{
 colorlinks=false,
 pdfborder={0 0 1},
 linkbordercolor={1 0 0},
 citebordercolor={1 0 0},
 urlbordercolor={1 0 0},
 pdftitle={The Odlyzko-Poonen conjecture},
 pdfauthor={}
}
\renewcommand{\eprint}[1]{\href{https://arxiv.org/abs/#1}{\nolinkurl{arXiv:#1}}}
\renewcommand{\PrintDOI}[1]{DOI \href{https://doi.org/#1}{\nolinkurl{#1}}}

\title[The Odlyzko--Poonen conjecture]{The Odlyzko--Poonen conjecture on irreducibility of random polynomials}
\date{}

\begin{document}
\author{Constantin Kogler}
\email{kogler@ias.edu}
\address{Constantin Kogler, Institute for Advanced Study, 1 Einstein Dr, Princeton, NJ 08540, United States of America}
\begin{abstract}
The Odlyzko--Poonen conjecture states that a monic polynomial of constant coefficient $1$ and with remaining coefficients chosen independently and uniformly from $\{0,1 \}$ is irreducible over the rational numbers with probability tending to one as the degree tends to infinity. We prove the Odlyzko--Poonen conjecture unconditionally.
\end{abstract}
\maketitle
\thispagestyle{empty}

\input{sections/introduction.tex}
\input{sections/autocorrelations-modulo-four.tex}
\input{sections/reciprocal-factors-and-sharp-rate.tex}

\input{bibliography/references.tex}
\end{document}

%% file: sections/introduction.tex
\section{Introduction}
For $n \geq 1$, let
\[
\cP_n=\left\{1+\sum_{j=1}^{n-1}p_jx^j+x^n:p_j\in\{0,1\}\right\}
\]
and choose $P_n$ uniformly at random from the $2^{n-1}$ elements of $\cP_n$. Odlyzko and Poonen
\cite{OP93} conjectured that $P_n$ is irreducible over $\Q$ with
probability tending to one as the degree tends to infinity. In 1999, Konyagin \cite{Kon99} established the
lower bound $c/\log n$ for the probability of irreducibility. Twenty years later, Breuillard and Varj\'u \cite{BV19} proved
the conjecture assuming the Generalized Riemann hypothesis and also established results for general coefficient measures. 
Bary-Soroker, Koukoulopoulos and Kozma \cite{BSKK23} showed
unconditional results for coefficients uniformly sampled from at least 35 consecutive integers. Moreover, for general coefficient measures including the case of $0,1$ polynomials \cite{BSKK23} showed that, for some $\theta,c>0$, the probability of a
factor of degree at most $\theta n$ is $O(n^{-c})$. Experimental evidence was discussed in \cite{BBBSWW18}.

In this paper, we establish the Odlyzko-Poonen conjecture without any assumptions. The proof is formalized in Lean.

\begin{theorem}\label{thm:main} As $n \to \infty$,
\[
\Prob(P_n\text{ is irreducible over }\Q)\longrightarrow1.
\]
Moreover, as $n\to\infty$,
\begin{equation}\label{eq:asymptoticexpansion}
\Prob(P_n\text{ is reducible over }\Q)
=\sqrt{\frac{2}{\pi n}}+O(n^{-1}).
\end{equation}
\end{theorem}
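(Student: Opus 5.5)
The plan is to reduce reducibility to the presence of \emph{reciprocal} factors, via an autocorrelation identity, and then to show that among reciprocal factors only the factor $x+1$ matters at the scale $n^{-1/2}$. For $f\in\Z[x]$ of degree $d$ write $\tilde f(x)=x^d f(1/x)$. Suppose $P=P_n=AB$ with $A,B$ nonconstant. Put $Q=A\tilde B$; this is the standard Ljunggren/Filaseta trick. Then
\[
Q\tilde Q=A\tilde A B\tilde B=P\tilde P ,
\]
so $Q$ has the same autocorrelation coefficients $c_k=\sum_j p_jp_{j+k}$ as $P$. In particular $\sum_j q_j^2=c_0=\sum_j p_j$, and $Q(1)^2=P(1)^2$.

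\textbf{Step 1 (autocorrelations modulo four).} I would show that with probability $1-O(n^{-1})$ every $Q\in\Z[x]$ with $Q\tilde Q=P\tilde P$ lies in $\{\pm P,\pm\tilde P\}$.

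First, reduce modulo $2$: $Q\tilde Q\equiv P\tilde P$ and $q_j^2\equiv q_j$ give information on the support of $Q$ modulo $2$. Next, use the relations $c_k\pmod 4$ to propagate this. Working modulo $4$, the vanishing of certain combinations of the $c_k$ should force $q_j\in\{0,\pm1\}$ with a consistent sign. Then $Q$ is a $0/\pm1$ polynomial whose autocorrelation matches that of the random $P$.

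Finally, I would exclude $Q\neq\pm P,\pm\tilde P$. For this I would determine $Q$ coefficient by coefficient from the two ends: the equations $c_{n-k}=\sum_{j\le k} q_jq_{n-k+j}$ fix the unknowns successively. A branching choice can arise only when certain random partial sums vanish, and I would show that such collisions have probability $O(n^{-1})$ by a union bound over $k$ using Littlewood--Offord/local limit estimates.

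I expect this step to be the main obstacle. It requires a clean combinatorial lemma of the form ``$c_k\bmod 4$ determines $q$ up to reversal unless an exceptional event happens'', together with a probability estimate for the exceptional event that is uniform in $n$. To get the combinatorics right I would first try to prove it for low-degree windows.

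\textbf{Step 2 (reduction to reciprocal factors).} If $Q=\pm P$, then $\tilde B=\pm B$. If $Q=\pm\tilde P$, then $A=\pm\tilde A$. Either way $P$ has a nonconstant reciprocal factor $R$ (up to sign), and hence $R\mid\gcd(P,\tilde P)$. So outside an event of probability $O(n^{-1})$, reducibility of $P$ implies that $P$ has a reciprocal factor.

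\textbf{Step 3 (reciprocal factors and the sharp rate).} I would split into cyclotomic and noncyclotomic reciprocal factors.
\begin{enumerate}[label=(\roman*)]
\item The factor $x-1$ never occurs, since $P(1)\ge 2$.
\item The factor $x+1$ occurs iff $P(-1)=0$, i.e.\ $1+(-1)^n+\sum_{j=1}^{n-1}(-1)^jp_j=0$. This is a lattice walk with variance $(n-1)/4$, so the local limit theorem with an Edgeworth correction gives probability $\sqrt{2/(\pi n)}+O(n^{-1})$.
\item A factor $\Phi_m$ with $m\ge3$ forces $P(\zeta_m)=0$, i.e.\ two independent real linear constraints, which costs $O(n^{-1})$. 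Summing over $m$ uses that for large $m$ the constraint $P(\zeta_m)=0$ is even rarer.
\item For a noncyclotomic reciprocal factor, I would use the bound of \cite{BSKK23} on factors of degree $\le\theta n$. For larger degrees I would use that $\gcd(P,\tilde P)$ having large degree is, again via the autocorrelation argument of Step 1, an $O(n^{-1})$ event.
\end{enumerate}
The event $P(-1)=0$ already implies reducibility, so combining the three steps gives \eqref{eq:asymptoticexpansion}, and in particular the first assertion of Theorem~\ref{thm:main}.
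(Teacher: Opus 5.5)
\textbf{Gap in Step 1.} Step 1 is the entire difficulty, and the mechanism you sketch for it fails. When you peel from the ends, the equation for $c_{n-k}$ sees the new unknowns only through $q_k+q_{n-k}$, because the cross terms $q_jq_{n-k+j}$, $1\le j\le k-1$, are already fixed. So a genuine two-way branch occurs whenever this pair sum equals $1$. That happens with probability $1/2$ for every $k<n/2$; it is not a rare vanishing of partial sums. Only the first such branch is the unavoidable $P$ versus $\tilde P$ ambiguity. The later ones keep multiplying the number of partial candidates, and Littlewood--Offord or local limit estimates give no control over how many of them survive the remaining constraints, so no union bound over $Q$ is available. (Your mod-$4$ sign/support discussion is also unnecessary: $\sum q_j^2=\sum p_j=|Q(1)|$ already forces $\pm Q\in\mathcal P_n$, as in Lemma~\ref{lem:flip}.)

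The missing idea is to parametrize all candidates through unique factorization in $\mathbb F_2[x]$. If $\bar Q\bar Q^*=\bar P\bar P^*$ and $a=\gcd(\bar P,\bar Q)$, then $\bar P=ab$ and $\bar Q=ab^*$, hence $P=s(ab)$ and $Q=s(ab^*)$ with $a,b$ nonreciprocal. The quantifier over $Q$ thereby becomes a union over the $n$ degree splits of a uniformly random pair $(a,b)$. Now condition on $a$ and $c=b+b^*$ and reveal $b_1,b_2,\ldots$. The defect $D_k=[PP^*-QQ^*]_k/2 \bmod 2$ for $k\le\lfloor(e-1)/2\rfloor$ depends only on $b_1,\dots,b_k$, and it is flipped by $b_k$ exactly when $[(a+a^*)c]_k=1$. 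Beyond the first index $j$ with $a_j\ne a_{d-j}$, these bits are independent and fair. This gives the exponential bounds of Proposition~\ref{prop:pairs} and Theorem~\ref{thm:rigidity}. Note that these $D_k$ are precisely your end equations. The $\mathbb F_2$ parametrization is what turns each of them into a fresh fair test rather than a branching point.

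\textbf{Gap in Step 3(iv).} This step does not deliver the sharp rate. The estimate from \cite{BSKK23} is only $O(n^{-c})$ for some unspecified $c>0$. That is enough for the first assertion, but not for the $O(n^{-1})$ error in \eqref{eq:asymptoticexpansion}. The paper instead proves a nearly exponential bound for noncyclotomic reciprocal factors of every degree (Proposition~\ref{lem:reciprocal}), as follows.
\begin{enumerate}
\item Any such factor divides $S=P_n+P_n^*$.
\item Conditioning on $S$ fixes its at most $n$ irreducible factors, while the order within each opposite pair with $s_i=1$ remains random.
\item The Breuillard--Varj\'u separation argument, using Dobrowolski's bound, provides a prime $q\asymp(\log n)^4$ such that at most one assignment of the pairs in a suitable residue class modulo $q$ can make a given noncyclotomic $J$ divide $P_n$.
\end{enumerate}
For high-degree reciprocal factors no autocorrelation argument is needed: counting reciprocal divisors over $\mathbb F_2$ (Lemma~\ref{lem:reciprocal-degree}) suffices. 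Your treatment of $x+1$ is fine. Your treatment of $\Phi_m$, $m\ge3$, is also fine once the bound is taken from \cite{BV19}*{Lemma 45 and Section 9.2}, since the sum over $m$ is not automatic.
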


The asymptotic expansion \eqref{eq:asymptoticexpansion} was already established conditional on the Generalized Riemann Hypothesis in \cite{BV19}*{Corollary 4} together with a formula for higher order terms \cite{BV19}*{Corollary 3}. As discussed in Remark \ref{rm:higherorderexpansion}, our proof also allows us to determine the probability of reducibility to arbitrary order. 

In \cite{BV19}*{Theorem 2} it is moreover shown assuming the Generalized Riemann Hypothesis that there exist $c,C > 0$ such that with probability at least $1 - Ce^{-c\sqrt{n}/\log n}$ a uniformly random $P_n \in \mathcal{P}_n$ has a decomposition $P_n = \Phi I$ with $\Phi$ a product of cyclotomic polynomials satisfying $\deg \Phi \leq C\sqrt{n}$ and $I \in \mathbb{Z}[x]$ being irreducible and noncyclotomic. We also make the latter result unconditional and strengthen the probability from $1 - Ce^{-c\sqrt{n}/\log n}$ to $1 - Ce^{-c\sqrt{n}}$. Indeed, we prove an almost exponential bound on the existence of such a decomposition $P_n = \Phi I$ with no degree condition on $\Phi$.

\begin{theorem}(Irreducibility of the noncyclotomic part)\label{thm:IrredFac} There exist absolute constants \(c,C>0\) such that for every \(n\ge1\) a uniformly random \(P_n\in\mathcal P_n\) admits a factorization \(P_n=\Phi I\), where \(\Phi\) is a product of cyclotomic polynomials and \(I\in\mathbb Z[x]\) is irreducible and noncyclotomic, with probability at least
\[
1-C\exp\!\left(-\frac{cn}{(\log n)^4}\right).
\]
Moreover, for every fixed \(0<\alpha<1\), there exist constants \(c_\alpha,C_\alpha>0\) such that, with probability at least \(1-C_\alpha e^{-c_\alpha n^\alpha}\), such a factorization exists with \(\deg \Phi \le n^\alpha\).
\end{theorem}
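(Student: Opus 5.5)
The plan is to reduce irreducibility of the noncyclotomic part to a rigidity statement about autocorrelations, in the spirit of the classical Ljunggren--Filaseta argument. For \(f\in\mathbb Z[x]\) write \(f^*(x)=x^{\deg f}f(1/x)\). Suppose \(P=P_n\) fails the conclusion. Then we can write \(P=\Phi AB\) with \(\Phi\) the full cyclotomic part and \(A,B\) nonconstant noncyclotomic. Set \(Q=\Phi AB^*\). Then \(QQ^*=PP^*\), so \(Q\) has the same aperiodic autocorrelations \(c_k=\sum_j p_jp_{j+k}\) as \(P\). In particular \(\sum_j q_j^2=c_0=\#\{j:p_j=1\}\) and \(Q(1)^2=P(1)^2\). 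I would organize the proof in three steps.

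\textbf{Step 1 (rigidity of autocorrelations).} Show that with probability at least \(1-C\exp(-cn/(\log n)^4)\), every \(Q\in\mathbb Z[x]\) with \(QQ^*=PP^*\) equals \(\pm P\) or \(\pm P^*\). This is where the autocorrelations modulo four enter. First, \(\sum q_j^2=\sum p_j^2\) together with the identity \(\sum q_j^2\equiv\sum q_j\pmod 2\) forces most coefficients of \(Q\) into \(\{-1,0,1\}\). Second, reducing modulo \(2\) gives \(QQ^*\equiv PP^*\) in \(\mathbb F_2[x]\), and modulo \(4\) the identity \(P(x)^2\equiv P(x^2)\) for \(0/1\) polynomials refines this to a congruence on the supports. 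I would then argue that, for a random \(0/1\) string, the supports of \(Q\) and \(P\) must coincide up to reversal. The idea is to reconstruct the support of \(P\) from \(c_k \bmod 4\), working inward from the two ends: at each step the next unknown bit is determined by a linear condition mod \(2\) or \(4\) unless a certain local configuration occurs. The probability of reaching an ambiguous configuration too often is then bounded by a union bound over blocks of length about \((\log n)^2\). This loss in block length is what produces the \((\log n)^4\) in the exponent. Finally, signs are fixed using \(Q(1)^2=P(1)^2\) and the fact that the leading and constant coefficients of \(Q\) are \(\pm1\).

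\textbf{Step 2 (reduction to reciprocal factors).} If \(Q=\pm x^kP\) then \(B^*=\pm B\). If \(Q=\pm x^kP^*\) then \(\Phi AB^*=\pm\Phi^*A^*B^*\), and since \(\Phi\) is reciprocal up to sign this gives \(A^*=\pm A\). In either case \(P\) has a nonconstant noncyclotomic reciprocal factor \(R\). Hence \(R\) divides \(\gcd(P,P^*)\), and therefore \(R\) divides \(P-P^*\) and \(P+P^*\). I would bound the probability of this event separately, which I expect is the content of the section on reciprocal factors. The approach is to write \(P=RS\) and \(P^*=\pm RS^*\), and note that \(S\) and \(S^*\) have equal autocorrelations up to the factor \(R\). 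One then repeats the mod-\(4\) reconstruction on \(P\) and \(P^*\), which are independent-looking strings, to get a bound \(\exp(-cn/(\log n)^4)\). A degenerate sub-case is \(P=\pm P^*\), but palindromic \(0/1\) strings have probability only \(2^{-\lfloor (n-1)/2\rfloor}\).

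\textbf{Step 3 (degree of \(\Phi\)).} For the second assertion, observe that if \(\deg\Phi>n^\alpha\) then either some \(\Phi_m\mid P\) with \(\varphi(m)\ge n^{\alpha}/\log n\), or many distinct \(\Phi_m\) divide \(P\). Divisibility by \(\Phi_m\) is a set of \(\varphi(m)\) linear conditions on \(P \bmod x^m-1\), namely on the residue-class sums \(\sum_{j\equiv r\ (m)}p_j\). When \(m\le n\) these sums are sums of about \(n/m\) independent bits. A local limit / Littlewood--Offord estimate then bounds the probability that they satisfy \(\varphi(m)\) independent linear relations by \(\exp(-c\varphi(m))\). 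Summing over \(m\), and using that \(\prod\Phi_m\) over a set of \(m\) is the conjunction of the corresponding conditions, gives \(C_\alpha e^{-c_\alpha n^\alpha}\) after adjusting \(\alpha\).

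The main obstacle is Step 1: proving that the autocorrelation data modulo \(4\) pins down a random \(0/1\) string up to reversal with failure probability that is almost exponentially small. The difficulty is uniformity over all integer \(Q\), not just \(0/1\) ones. The first thing I would try is an induction from both ends with a bounded look-ahead window, controlling failure events by independence across disjoint windows.
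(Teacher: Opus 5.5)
Your skeleton (twin $Q=\Phi AB^*$, rigidity forcing $Q\in\{P,P^*\}$ and hence a noncyclotomic reciprocal factor, then a separate bound on $\deg\Phi$) is the paper's. However, Step~2 as proposed cannot work. A reciprocal factor $R$ is invisible to autocorrelations. For $P=RS$ both twins, $RS^*=P^*$ and $SR^*=P$, are trivial, and $S,S^*$ (like $P,P^*$) always have identical autocorrelations. So no rigidity statement about $QQ^*=PP^*$, modulo $4$ or over $\mathbb Z$, can exclude $R$. Nor are $P,P^*$ ``independent-looking'': one is the reversal of the other. This case needs an arithmetic input (Proposition~\ref{lem:reciprocal}).

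The paper conditions on $S=P+P^*$, which every reciprocal divisor of $P$ divides. This fixes the at most $n$ irreducible factors $J$ of $S$, while the orientation of each opposite pair with $p_i+p_{n-i}=1$ stays a fair coin. Take a noncyclotomic $J$ with roots $\alpha_1,\dots,\alpha_h$. Dobrowolski's bound $\log M(J)\gg(\log n)^{-3}$ and the Breuillard--Varj\'u separation argument give a prime $q\in(T,2T]$, $T=K(\log n)^4$, with the $\alpha_i^q$ distinct and $M(J)^q>\sqrt{n+1}$. Hence $J$ divides no nonzero $\{-1,0,1\}$-polynomial $D=x^aV(x^q)$, since otherwise $M(J)^q\le M(V)=M(D)\le\|D\|_2\le\sqrt{n+1}$ (Lemma~\ref{lem:BV-separation}). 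Now use the residue class $a_q$ with $2a_q\equiv n\pmod q$. It is stable under $i\mapsto n-i$ and, except on an event of probability at most $2Te^{-n/(128T)}$, contains at least $n/(32T)$ free pairs. At most one orientation of these pairs yields $J\mid P$, and a union bound over $J$ gives $n2^{-n/(32T)}$. This is where $(\log n)^4$ comes from. It is not a block-length loss in the rigidity step, which is in fact exponential (Theorem~\ref{thm:rigidity}).

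Step~1 is a heuristic rather than an argument, and its stated obstacle disappears. Since $B^*(1)=B(1)$ you have $Q(1)=P(1)$ exactly, so $\sum q_j^2=\sum p_j=\sum q_j$ forces \emph{every} $q_j\in\{0,1\}$ (Lemma~\ref{lem:flip}). The real difficulty is the quantifier over competitors $Q$. Reading $PP^*$ inward from the ends reveals only $p_k+p_{n-k}$. The orientations of pairs with sum $1$ stay ambiguous and are tested only jointly by later cross terms. You never specify the ``local configuration'' or how a block union bound would control exponentially many orientation patterns. (Also, $P(x)^2\equiv P(x^2)$ holds modulo $2$, not modulo $4$.)

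The missing idea is the parametrization of competitors over $\mathbb F_2$ (Proposition~\ref{prop:pairs}). With $a=\gcd(\bar P,\bar Q)$ one gets $\bar P=ab$, $\bar Q=ab^*$ with $a,b$ nonreciprocal, so for each degree split it suffices to treat uniform independent $(a,b)$. Condition on $c=b+b^*$ and reveal $b_1,\dots,b_m$. Toggling $b_k$ (and with it $b_{e-k}$) changes $\tfrac12[PP^*-QQ^*]_k \bmod 2$ by $\lambda_k=[(a+a^*)c]_k$. Past the first asymmetric index of $a$ these $\lambda_k$ are i.i.d.\ fair bits, each costing a factor $\mathbb E[2^{-\lambda}]=3/4$.

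Finally, Step~3 ignores repeated factors $\Phi_m^k$ and divisors $\Phi_m$ with $m>n$ (e.g.\ $\Phi_{2n}\mid 1+x^n$). The paper's route is simpler: $\Phi$ is reciprocal because $P(1)>0$, so $\bar\Phi$ is a reciprocal divisor of $\bar P$ of degree $\deg\Phi$. There are $2^{\lfloor h/2\rfloor}$ such candidates of degree $h$, each dividing $\bar P$ with probability at most $2^{1-h}$. This gives $\Prob(\deg\Phi\ge L)\le 8\cdot2^{-L/2}$ (Lemma~\ref{lem:reciprocal-degree}). Taking $L=n$ and $L=\lfloor n^\alpha\rfloor+1$ then yields both assertions.
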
 

We proceed with outlining the proof of Theorem~\ref{thm:main} and Theorem~\ref{thm:IrredFac}. The starting point is what we call the twin factorization trick. The twin factorization trick has its origin in the work of Ljunggren \cite{Lju60}, dealing with certain classes of polynomials.  A generalization of  Ljunggren's ideas to arbitrary polynomials can be found in Filaseta \cite{Fil99}. Our proof combines the twin factorization trick with the insight that non-trivial twin factorizations are rare.

For a polynomial $P(x)= \sum_{j = 0}^n a_j x^j$ of degree $n$, denote by $$P^{*}(x) = x^{\deg P} P(x^{-1}) = \sum_{j = 0}^n a_{n-j} x^j$$ the polynomial with reversed coefficients. We will work with the reciprocal product $PP^*$. Note that for $-n\le k\le n$, the coefficient of $x^{n-k}$ in $PP^*$ is
$\sum_{j=0}^{n-|k|}a_j a_{j+|k|}$.

We now explain the twin factorization trick. Assume there is a factorization $P_n = AB$ with $A,B \in \mathbb{Z}[x]$ monic polynomials, which is sufficient to consider by Gauss's Lemma. One then considers the twin polynomial $Q = AB^{*}$ and observes that since reversal commutes with polynomial products and as polynomial multiplication is commutative, we have the twin factorization $QQ^* = P_nP_n^*$ of the reciprocal product $P_nP_n^*$. It is known from \cite{Fil99} (see Lemma~\ref{lem:flip}) that $Q$ is also in $\mathcal{P}_n$ and that, if neither $A$ nor $B$ is reciprocal, $Q$ is different from $P_n$ and $P_n^{*}$. In other words, if $P_n$ is reducible and has no non-constant reciprocal factors, we have a non-trivial twin factorization.

A nearly exponential bound for the occurrence of reciprocal noncyclotomic factors will be established in Proposition~\ref{lem:reciprocal}. The core of the proof of Theorem~\ref{thm:main} and Theorem~\ref{thm:IrredFac} is showing that the probability of a non-trivial twin factorization  $QQ^{*} = P_nP_n^{*}$ with $Q \in P_n\setminus\{P_n,P_n^*\}$ is small, as is established in Theorem~\ref{thm:rigidity} below with the weaker requirement that $QQ^{*} \equiv P_nP_n^{*} \mod 4$. To then deduce Theorem~\ref{thm:main} one applies standard methods to deal with cyclotomic factors, while Theorem~\ref{thm:IrredFac} relies on general estimates on the occurrence of high degree factors.

\begin{theorem}\label{thm:rigidity}
For $P_n$ uniform in $\cP_n$,
\begin{equation}\label{eq:rigidity} 
\Prob\bigl(\exists Q\in\cP_n\setminus\{P_n,P_n^*\}:
 QQ^*\equiv P_nP_n^* \mod 4\bigr)\le8(3/4)^{\lfloor(n-1)/4\rfloor}.
\end{equation}
\end{theorem}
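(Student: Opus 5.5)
The plan is to exploit the fact that the coefficients of $P_nP_n^*$ are the aperiodic autocorrelations of the $0,1$ coefficient sequence, and to reconstruct any $Q$ with the same autocorrelations mod $4$ by peeling layers from the outside inward. Write $P_n=\sum a_jx^j$ and $Q=\sum b_jx^j$, with $a_0=a_n=b_0=b_n=1$. For $1\le k\le n-1$ the coefficient of $x^{k}$ in $PP^*$ is
\[
c_{n-k}(a)=\sum_{j=0}^{k}a_ja_{j+n-k}=a_k+a_{n-k}+\sum_{j=1}^{k-1}a_ja_{n-k+j}.
\]
The sum on the right involves only the outer layers $\{a_j,a_{n-j}\}$ with $j<k$. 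Pair the coordinates as $\pi_k=(a_k,a_{n-k})$ for $1\le k<n/2$.

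Inductively, if $Q$ agrees with $P_n$ on all layers $j<k$ up to reversal within each layer, then the congruence $c_{n-k}(b)\equiv c_{n-k}(a)\pmod 4$ determines $b_k+b_{n-k}\pmod 4$. Since $b_k+b_{n-k}\in\{0,1,2\}$, this determines the unordered pair $\{b_k,b_{n-k}\}=\{a_k,a_{n-k}\}$. So every admissible $Q$ is obtained from $P_n$ by choosing, for each asymmetric layer (one with $a_k\neq a_{n-k}$), whether to swap it or not. Swapping all asymmetric layers gives $P_n^*$, and swapping none gives $P_n$.

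The core step is a parity constraint on these swap choices. Let $t$ be the first asymmetric layer; after possibly replacing $P_n$ by $P_n^*$ (costing a factor $2$), assume $Q$ does not swap $t$. If $Q\notin\{P_n,P_n^*\}$, there is a first asymmetric layer $s>t$ that $Q$ swaps. I then look at $c_{n-k}$ for $k$ slightly beyond $s$ and compare the cross sums $\sum a_ja_{n-k+j}$ and $\sum b_jb_{n-k+j}$. Their difference is a sum of terms of the form $(a_s-a_{n-s})(\cdots)$ multiplied by fresh coordinates $a_{k-s+t}$-type entries, plus terms involving $t$. Working mod $4$, the fact that the sum $b_k+b_{n-k}$ is already pinned down should force a congruence that is linear in fresh, not-yet-exposed coordinates. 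A nonzero such congruence holds with probability at most $3/4$ per block of about $4$ fresh indices. I expect only a factor $3/4$ rather than $1/2$ because mod $4$ carries arise when the difference involves two fresh bits.

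Exposing coordinates layer by layer, each block of $4$ consecutive layers contributes an independent factor $3/4$ conditional on the past, which gives $(3/4)^{\lfloor (n-1)/4\rfloor}$. The remaining constant $8$ comes from the $P\leftrightarrow P^*$ symmetry and from summing over the location of the first swap. To avoid a union bound over exponentially many swap patterns, I will argue that once $t$ and $s$ are fixed, the constraints determine the subsequent swap choices deterministically, so the only event to bound is the consistency of the forced equations.

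The main obstacle is the bookkeeping in this middle step. I need to show that at each block the constraint genuinely involves a fresh variable with a nonzero coefficient mod $4$, regardless of the earlier history, and to organize the exposure order so that conditional independence holds. My first attempt would be to expose the layers in increasing $k$, and to treat the step as a function of the pair (layer $t$, layer $s$) whose cross terms with new coordinates never cancel.
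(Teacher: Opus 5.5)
\textbf{There is a genuine gap.} The inductive claim in your second paragraph is false, and everything after it depends on it. The cross sum $\sum_{j=1}^{k-1}b_jb_{n-k+j}$ pairs the left entry of layer $j$ with the right entry of layer $k-j$, so it depends on the \emph{ordered} layers. Once $Q$ swaps some asymmetric layers but not others, its cross sum differs from that of $P_n$. The congruence then pins $b_k+b_{n-k}$ to a value other than $a_k+a_{n-k}$.

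For example, take $P=(1+x+x^3)(1+x^4+x^9)$ and $Q=(1+x+x^3)(1+x^5+x^9)$. Both lie in $\cP_{12}$, they satisfy $QQ^*=PP^*$, and $Q\notin\{P,P^*\}$. Layers $1,2,3$ coincide and layer $4$ is swapped. Then $\{a_5,a_7\}=\{1,1\}$ while $\{b_5,b_7\}=\{1,0\}$.

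Your claim would in fact prove too much. Agreement of all unordered layers gives $\bar P+\bar P^*=\bar Q+\bar Q^*$ in $\mathbb F_2[x]$. Since $\bar P\bar P^*=\bar Q\bar Q^*$ forces $\bar P=ab$ and $\bar Q=ab^*$ (take $a=\gcd(\bar P,\bar Q)$), this equation reads $(a+a^*)(b+b^*)=0$. Hence $Q\in\{P,P^*\}$ always. So the admissible $Q$ are not swap patterns. A cross-sum discrepancy at level $k$ is typically absorbed by changing $Q$'s own layer $k$, rather than giving a linear constraint on fresh coordinates of $P_n$. Your assertion that the swaps after $t,s$ are forced has no support, and the factor $3/4$ per block of four layers is guessed, not derived.

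\textbf{The missing idea} is to parametrize the competitors by the factorization mod $2$: write $Q=s(ab^*)$ where $\bar P_n=ab$ with $a,b$ nonreciprocal. Then take a union bound over degree splits $d+e=n$ with $d\le e$. For each split, compute the probability over independent uniform $a\in\mathcal B_d$ and $b\in\mathcal B_e$. This union bound pays for the possible multiplicity of $Q$'s, and summing the resulting geometric bounds over splits produces the constant $8$. The constant does not come from the $P\leftrightarrow P^*$ symmetry.

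For the per-split estimate, condition on $c=b+b^*$ and reveal $b_1,\dots,b_m$ in order, where $m=\lfloor(e-1)/2\rfloor$. Flipping $b_k$ (hence also $b_{e-k}$) flips $D_k=\tfrac12[PP^*-QQ^*]_k \bmod 2$ exactly when $\lambda_k=[(a+a^*)c]_k=1$. This $\lambda_k$ is precisely the mod $2$ layer-sum discrepancy that your framework assumed to vanish. Let $j$ be the first index with $a_j\ne a_{d-j}$. Beyond $j$, the $\lambda_k$ are a unitriangular function of $c$, hence independent fair bits. Each therefore contributes $\mathbb E[2^{-\lambda_k}]=3/4$. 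The exponent $\lfloor(n-1)/4\rfloor$ comes from $m\approx e/2$ together with $e\ge n/2$.
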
 

To the author's knowledge, Theorem~\ref{thm:rigidity} is novel. We remark that the exponential bound of Theorem~\ref{thm:rigidity} is not necessary for Theorem~\ref{thm:main} and any bound of the size $O(n^{-1})$ would be sufficient. We highlight the work of Filaseta-Kalogirou, who formulated Conjecture 1 in \cite{FK26} on difference multisets and stated that the Odlyzko-Poonen conjecture implies this conjecture. As explained in Section~\ref{sec:FK}, Theorem~\ref{thm:rigidity} implies Conjecture 1 of \cite{FK26} and establishes an exponential error rate. Moreover, it can be shown that an exponential error rate for Conjecture 1 of \cite{FK26} implies an exponentially small probability of a
non-trivial twin factorization. Our proof can therefore be viewed as completing a strategy closely related to ideas of Filaseta-Kalogirou.

Our results also apply to one-dimensional binary phase retrieval as discussed in \cite{JOH17}, \cite{YW18} and \cite{WLMZ20}. For \(p\in\{0,1\}^{n+1}\), the Fourier magnitude of $p$ is \(M_p(\theta)=|\sum_{j=0}^{n}p_je^{-ij\theta}|=|P(e^{-i\theta})|\), where \(P(z)=\sum_{j=0}^{n}p_jz^j\) and \(\theta\in[0,2\pi)\). For \(P,Q\in\mathcal P_n\), equality of Fourier magnitudes is equivalent to \(PP^*=QQ^*\), which, when \(P\) is irreducible, implies \(Q=P\) or \(Q=P^*\). Thus, Theorem~\ref{thm:main} gives uniqueness up to reversal with probability \(1-O(n^{-1/2})\) for a uniformly random binary signal with endpoints \(1\). Theorem~\ref{thm:rigidity} improves this uniqueness error rate to \(8(3/4)^{\lfloor(n-1)/4\rfloor}\), even when only the residues modulo \(4\) of the aperiodic autocorrelations \(r_k(p)=\sum_{j=0}^{n-k}p_jp_{j+k}\) with \(0\le k\le n\) are given. 

In this paper we focus on the case of $0,1$ polynomials and Theorem~\ref{thm:rigidity} is specific to this setting. The twin factorization trick, however, generalizes to arbitrary coefficient measures and we will treat more general cases in future work.

\subsection*{Machine Contribution Statement} The proofs are due to GPT-6 Astra. The author rewrote and checked the arguments, takes responsibility for their correctness
and would be delighted to give talks on the results and to answer any questions.

A proof of Theorem~\ref{thm:main} together with Theorem~\ref{thm:rigidity} was first found on September 14th, 2026 at noon, after the author was studying Bernoulli convolutions and the model proved that the Bernoulli convolution with defining parameter being a  zero in $(1/2,1)$ of a random polynomial with $-1,0,1$ coefficients has dimension 1. The model initially proved the latter claim conditional on the Generalized Riemann hypothesis, connecting to the work of Breuillard-Varjú \cite{BV19}, and was subsequently able to prove the claim unconditionally. This success led the author to try the Odlyzko-Poonen conjecture. Theorem~\ref{thm:IrredFac} resulted from refinements of the proof. 

\subsection*{Lean Formalization} GPT-6 Astra formalized all results from this paper and all necessary results from previous work. The formalization took around 30 hours and around 22'000 lines of code were written. The Lean source code is available in the accompanying repository \cite{Kog26Lean}.

\subsection*{Acknowledgments} I thank Emmanuel Breuillard and Péter Varjú for their rapid, selfless help and for closely supporting the writing of this paper. I am very grateful to Emmanuel Breuillard for checking the proof, his insightful mathematical comments and for extensively discussing the previous literature. I thank Michael Filaseta and Andrew Odlyzko for historical remarks and Akshay Venkatesh for advice and optimism. The author holds a Postdoc Mobility Fellowship from the Swiss National Science Foundation (grant number
235409) and thanks the Institute for Advanced Study.

%% file: sections/autocorrelations-modulo-four.tex
\section{Proof of Theorem~\ref{thm:rigidity}}
For $f\in\mathbb F_2[x]$, let $s(f)\in\Z[x]$ be its coefficientwise
zero-one lift. Denote for $n\geq 1$ by $\mathcal{B}_n$ the image of $\mathcal{P}_n$ under the natural map $\mathbb{Z}[x] \to \mathbb{F}_2[x]$, that is
\[
\mathcal B_n=\{f\in\mathbb F_2[x]:\deg f=n,\ f\text{ monic},\ f(0)=1\},
\qquad |\mathcal B_n|=2^{n-1}.
\]
The following proposition, which is the core novelty of our proof, implies Theorem~\ref{thm:rigidity}.

\begin{proposition}\label{prop:pairs}
Let $1\le d\le e$ and choose $a\in\mathcal B_d$ and $b\in\mathcal B_e$ independently and
uniformly. If $P=s(ab)$ and $Q=s(ab^*)$, then
\[
\Prob(a\ne a^*,\ PP^*\equiv QQ^*\pmod4)
\le2(3/4)^{\lfloor(e-1)/2\rfloor}.
\]
\end{proposition}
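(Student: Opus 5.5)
The plan is to turn the mod-$4$ condition into a polynomial identity over $\mathbb F_2$, and then expose the coefficients of $b$ in symmetric pairs, so that each pair contributes a factor of at most $3/4$.

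\textbf{Step 1: reduce mod $4$ to a carry identity.} Modulo $2$ there is nothing to check. Write $f=ab$ and $g=ab^*$. Then $PP^*\equiv ff^*$ and $QQ^*\equiv gg^*=ab^*a^*b=ff^*$, since reversal is multiplicative and $\mathbb F_2[x]$ is commutative. For the mod-$4$ information, I use that a nonnegative integer $N$ satisfies $N\equiv (N\bmod 2)+2\binom N2 \pmod 4$. Applied coefficientwise to the $0$--$1$ lifts, this gives
\[
s(u)s(v)\equiv s(uv)+2\,s(\kappa(u,v))\pmod 4,
\]
where the coefficient of $x^m$ in $\kappa(u,v)\in\mathbb F_2[x]$ is the parity of the number of unordered pairs of index pairs $(i,j)\neq(i',j')$ with $i+j=i'+j'=m$ and $u_iv_ju_{i'}v_{j'}=1$. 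The identity $PP^*\equiv QQ^*\pmod 4$ is then equivalent to
\[
\kappa(f,f^*)=\kappa(g,g^*)\quad\text{in }\mathbb F_2[x].
\]
Each coefficient of this equation is an explicit polynomial of degree at most $4$ in the coefficients of $f$ (respectively $g$). Because $\kappa(f,f^*)$ is palindromic, only the coefficients of $x^{n-k}$ with $k\ge1$ need to be matched. Each such coefficient is the parity of $\binom{r_k}{2}$, where $r_k$ is the number of $j$ with $f_jf_{j+k}=1$; this gives a sum over pairs $j<j'$ of $f_jf_{j+k}f_{j'}f_{j'+k}$.

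\textbf{Step 2: expose $b$ in symmetric pairs.} Fix $a$ with $a\neq a^*$, and let $t$ be the largest index where $a$ and $a^*$ disagree. Reveal the coefficients of $b$ in symmetric pairs $(b_i,b_{e-i})$, for $i=1,2,\dots,\lfloor (e-1)/2\rfloor$, moving inward from the ends. At each stage I will pick a coefficient index $k=k(i)$, depending on $i$ and $t$ and chosen so that the $k$-th equation from Step 1 only involves $b_0,\dots,b_i$ and $b_{e-i},\dots,b_e$. Conditioned on everything revealed earlier, this equation is then a polynomial of degree at most $2$ in the two new bits $(b_i,b_{e-i})$. The point is that $f=ab$ and $g=ab^*$ swap the roles of $b_i$ and $b_{e-i}$, so the two sides of the equation differ by a polynomial in $(b_i,b_{e-i})$.

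If that difference is a nonconstant polynomial of degree at most $2$ in two variables over $\mathbb F_2$, it vanishes with conditional probability at most $3/4$. Multiplying over the stages gives $(3/4)^{\lfloor(e-1)/2\rfloor}$. The extra factor $2$ is reserved for one exceptional case, which I would handle separately and union-bound. That case is either $b$ being palindromic at the top, or $t$ being so small that the first admissible stage must be discarded.

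\textbf{Main obstacle: nonconstancy.} The hard step is to show that at each stage the difference polynomial in $(b_i,b_{e-i})$ is genuinely nonconstant. I would first compute the part of the difference that is linear in $b_i+b_{e-i}$ and in $b_ib_{e-i}$, by extracting the terms of $\binom{r_k}{2}$ that involve the extreme coefficients $a_0=a_d=1$ together with the asymmetric coefficient $a_t$. The hope is that the coefficient of one of these monomials equals $a_t+a_{d-t}=1$, plus terms that are already determined by the earlier-revealed bits. If that direct choice of $k$ collapses through cancellations, I would try $k=e+d-t-i$ or a neighbouring index, using $a\ne a^*$ to guarantee a surviving term.
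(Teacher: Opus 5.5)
Your Step 1 reformulation is correct and is equivalent to the paper's quantities $D_k$. The gap is exactly the step you call the main obstacle. The nonconstancy you hope for is false, so no choice of $k(i)$ rescues a deterministic per-stage factor $3/4$.

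\textbf{Why nonconstancy fails.} The natural equation meeting your requirement at stage $i$ is the shift-$(n-i)$ one, i.e.\ the coefficient of $x^i$ in $(PP^*-QQ^*)/2$. Write $r=a+a^*$, $x=b_i$, $y=b_{e-i}$. The $xy$ terms cancel, and the condition becomes $u_ix+v_iy=\mathrm{const}$ over $\mathbb F_2$, with
\[
u_i=\sum_{s=1}^{i}r_s\,b_{i-s},\qquad v_i=\sum_{s=1}^{i}r_s\,b_{e-i+s}.
\]
So, given the past, stage $i$ is passed with probability $1/2$ if $(u_i,v_i)\ne(0,0)$, and with probability $0$ or $1$ otherwise. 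There is no deterministic bound $3/4$.
\begin{itemize}
\item Let $j$ be the \emph{first} index with $a_j\ne a_{d-j}$ (your $t$ equals $d-j$). Then $u_i=v_i=0$ for every $i<j$, so the first $j-1$ stages give nothing.
\item $j$ can be as large as $\lfloor(d-1)/2\rfloor$. So this is not one exceptional stage that a factor $2$ can absorb by a union bound.
\item For $i>j$, the pair $(u_i,v_i)$ is a function of earlier bits and vanishes with positive probability. For example, $(u_{j+1},v_{j+1})=(b_1+r_{j+1},\,b_{e-1}+r_{j+1})$.
\item Your fallback shift $e+d-t-i=e+j-i$ is smaller than $n-i$. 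In general it brings in unrevealed middle coefficients of $b$, contrary to your own requirement.
\end{itemize}

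\textbf{The missing idea: get $3/4$ by averaging, not from nonconstancy.} The paper first conditions on $c=b+b^*$. Then $b_{e-i}=b_i+c_i$, and the stage-$i$ condition is linear in the single fresh fair bit $b_i$. Its coefficient is $\lambda_i=u_i+v_i=[(a+a^*)c]_i$, which depends only on $a$ and $c$. This gives $\Prob(E\mid a,c)\le\prod_{k\le m}2^{-\lambda_k}$.

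For $k>j$, the map $(c_1,\dots,c_{m-j})\mapsto(\lambda_{j+1},\dots,\lambda_m)$ is unitriangular. So the $\lambda_k$ are independent fair bits, $\mathbb E[2^{-\lambda_k}]=3/4$, and hence $\Prob(E\mid a)\le(3/4)^{m-j}$. The $j$ lost stages are paid for by $\Prob(j)=2^{-j}$ over the choice of $a$. The factor $2$ comes from $\sum_{j\ge1}(2/3)^j\le2$, not from an exceptional case.

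Without this decoupling, passing the earlier stages biases the revealed pairs $(b_\ell,b_{e-\ell})$. Your scheme then gives no control over how often $(u_i,v_i)=(0,0)$.
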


\begin{proof}
Denote
\[
n=d+e,\qquad m=\left\lfloor\frac{e-1}{2}\right\rfloor,
\]
and let $E$ be the event $PP^*\equiv QQ^*\pmod 4$.
For a polynomial $f$, write $[f]_k$ for its coefficient of $x^k$.

We first describe a useful way of sampling $b$. Set
\[
c=b+b^*,\qquad c_i=b_i+b_{e-i}.
\]
The sums \(c_i=b_i+b_{e-i}\) in $\mathbb{F}_2$ for $1 \leq i \leq m$ are independent fair 0-1 random variables.
Moreover, conditional on $c$, the coefficients $b_1,\ldots,b_m$ are still
independent and fair, and the opposite coefficients are determined by $b_{e-i}=b_i+c_i.$
If $e$ is even, the middle coefficient of $b$ is an additional
independent variable and it will not enter the calculations below.
Thus we may first choose $c$, and then choose the remaining coefficients of $b$.

So fix $a$ and $c$. Modulo $2$, both $PP^*$ and $QQ^*$ equal
$aa^*bb^*$. We can therefore define
\[
D_k=\frac{[PP^*-QQ^*]_k}{2}\pmod 2.
\]
The event $E$ requires every $D_k$ to vanish and in particular $E\subseteq\{D_1=\cdots=D_m=0\}.$ We shall check these first $m$ conditions by revealing
$b_1,\ldots,b_m$ in order.

Write $P=\sum_{i=0}^n p_i x^i$ and $Q=\sum_{i=0}^n q_i x^i.$
Since the leading and constant coefficients of $P$ are $1$, for $1\leq k \leq m$,
\[
[PP^*]_k
=
p_k+p_{n-k}
+\sum_{i=1}^{k-1}p_i p_{n-k+i}.
\]
For $k\le m$, the coefficients appearing here and with $a$ fixed depend only on the
first $k$ opposite pairs of coefficients $(b_1, b_{e-1}), \ldots , (b_k, b_{e-k})$ of $b$. The same holds for
$Q$. Therefore, with $a$ and $c$ fixed, $D_k$ depends only on
$b_1,\ldots,b_k$.

We now determine whether changing the new coefficient $b_k$ changes $D_k$.
Changing $b_k$ while keeping $c$ fixed also changes $b_{e-k}$.
Because $2k<e$, among the coefficients appearing in the expression
for $[PP^*]_k$, only $p_k$ and $p_{n-k}$ change. So if we change $b_k$ with $a$ fixed we also change $p_k \equiv b_k + \sum_{r = 1}^{\min(d,k)} a_r b_{k-r} \mod 2$ and similarly $p_{n - k}$ so that
\[
p_k\longmapsto 1-p_k,\qquad
p_{n-k}\longmapsto 1-p_{n-k}.
\]
The same holds for $q_k$ and $q_{n-k}$. Hence the change in the integer
quantity $[PP^*-QQ^*]_k/2$ is
\[
\frac{
(2-2p_k-2p_{n-k})-(2-2q_k-2q_{n-k})
}{2}.
\]
Reducing modulo $2$, we find that changing $b_k$ changes $D_k$ by the image in $\mathbb{F}_2$ of
\[
[P + P^{*} +  Q + Q^{*}]_k = p_k+p_{n-k}+q_k+q_{n-k}.
\]

On the other hand, the change of $D_k$ is determined by $a$ and $c$. Indeed, let a bar denote reduction modulo $2$. We have $\overline P+\overline Q=a(b+b^*)=ac$
and $\overline{P^*}+\overline{Q^*}
=a^*(b^*+b)=a^*c.$
Thus, by the above, changing $b_k$ changes $D_k$ by
\[
\lambda_k:=[(a+a^*)c]_k\in\mathbb F_2.
\]
In particular, $\lambda_k$ is determined by $a$ and $c$.
 
If $\lambda_k=1$, changing $b_k$ flips $D_k$, so exactly one of
the two choices of $b_k$ satisfies $D_k=0$.
If $\lambda_k=0$, changing $b_k$ leaves $D_k$ unchanged, so either
both choices satisfy the condition or neither does.

Conditional on $a$ and $c$ and all previously revealed coefficients, 
$b_k$ is a fresh fair coefficient. Revealing the coefficients successively therefore
gives
\[
\mathbb P(E\mid a,c)
\le
\mathbb P(D_1=\cdots=D_m=0\mid a,c)
\le
\prod_{k=1}^m 2^{-\lambda_k},
\]
where each $\lambda_k$ is identified with its representative in
$\{0,1\}$. 

Next, keep $a\ne a^*$ fixed and average over $c$. Write
\[
r=a+a^*,\qquad
j=\min\{i:r_i\ne0\}.
\]
Equivalently, $j$ is the first index at which $a_j\ne a_{d-j}$.
Thus $1\le j\le
\left\lfloor\frac{d-1}{2}\right\rfloor
\le m$ and $r_j=1.$ Since $c_0=0$, the first $j$ coefficients
$\lambda_1,\ldots,\lambda_j$ vanish. The subsequent coefficients
have the form $\lambda_{j+1}=c_1$ and $\lambda_{j+2}=c_2+r_{j+1}c_1$ as well as in general, for $1\le t\le m-j$,
\[
\lambda_{j+t}
=
c_t+\sum_{\ell=1}^{t-1}r_{j+t-\ell}c_\ell.
\]
Thus each successive $\lambda_{j+t}$ contains a new fair coefficient $c_t$
with coefficient $1$. Equivalently, the map $(c_1,\ldots,c_{m-j})
\mapsto
(\lambda_{j+1},\ldots,\lambda_m)$
comes from a triangular matrix with diagonal entries equal to $1$, and is therefore
a bijection. It follows that
$\lambda_{j+1},\ldots,\lambda_m$ are independent fair coefficients.

For a fair coefficient $\lambda$, we have
$\mathbb E[2^{-\lambda}]
=
\frac12\cdot1+\frac12\cdot\frac12
=
\frac34.$
Averaging the preceding conditional bound over $c$, we obtain $\mathbb P(E\mid a)
\le
\mathbb E_c\left[\prod_{k=1}^m2^{-\lambda_k}\right]
=
(3/4)^{m-j}.$ Finally, we average over $a$ and denote $h=\left\lfloor\frac{d-1}{2}\right\rfloor.$
The $h$ opposite pairs $(a_i,a_{d-i})$, for $1\le i\le h$, are
independent, and each pair agrees with probability $1/2$ and differs
with probability $1/2$. Consequently, the unconditional probability
that the first differing pair has index $j$ is $\left(\frac12\right)^{j-1}\frac12=2^{-j}$ with $1 \leq j \leq h$.
If no pair differs, then $a=a^*$, which is excluded from the event
under consideration. Therefore
\[
\begin{aligned}
\mathbb P(a\ne a^*,E)
\le
\sum_{j=1}^h 2^{-j}(3/4)^{m-j} = (3/4)^m\sum_{j=1}^h(2/3)^j \le
2(3/4)^m.
\end{aligned}
\]
This proves the proposition.
\end{proof}

We deduce Theorem~\ref{thm:rigidity} from Proposition~\ref{prop:pairs}.

\begin{proof}[Proof of Theorem~\ref{thm:rigidity}]
For $Q \in \mathcal{P}_n \backslash \{ P_n, P_n^* \}$ with $QQ^* \equiv P_nP_n^* \mod 4$, write $F=\bar P_n$ and $G=\bar Q$ with the bar denoting reduction $\mathrm{mod} \,\, 2$. Denote $a=\gcd(F,G)$  so that $F = ab$ and $G = at$ with $b,t \in \mathbb{F}_2[x]$. Canceling $aa^*$ from $FF^*=GG^*$ gives
$bb^*=tt^*$. Since $\gcd(b,t)=1$, we have $b\mid t^*$ and equal degrees
and monicity give $t=b^*$. Hence $P_n=s(ab)$ and $Q=s(ab^*).$ Both $a$ and $b$ are nonreciprocal, since otherwise $Q=P_n^*$ or $Q=P_n$.
Replacing $Q$ by $Q^*=s(ba^*)$ exchanges $a$ and $b$. So we may arrange
$1\le d=\deg a\le e=\deg b$ with $d+e=n$.

For a fixed degree split $d + e = n$ there are $2^{d-1}2^{e-1} = 2^{n-2}$ ordered pairs $(a,b)$ of these degrees. By Proposition 2.1, at most
$2^{n-2}\cdot 2(3/4)^{\lfloor(e-1)/2\rfloor}$
of these pairs satisfy \(a\ne a^*\) and the required congruence for \(P_n=s(ab)\) and \(Q=s(ab^*)\).   Thus, dividing by \(|\mathcal P_n|=2^{n-1}\), the probability contributed by this degree split is at most
\[
\frac{2^{n-2}\cdot 2(3/4)^{\lfloor(e-1)/2\rfloor}}{2^{n-1}}
=(3/4)^{\lfloor(e-1)/2\rfloor}.
\]Summing over the possible degree splits therefore gives
\[
\mathbb P\!\left(
\exists Q\in\mathcal P_n\setminus\{P_n,P_n^*\}:
QQ^*\equiv P_nP_n^*\pmod 4
\right)
\le
\sum_{e=\lceil n/2\rceil}^{n-1}
(3/4)^{\lfloor(e-1)/2\rfloor}.
\]
Each exponent in this sum is at least \(\lfloor(n-1)/4\rfloor\), and each integer exponent occurs at most twice. Consequently, the sum is bounded by
$2\sum_{k=\lfloor(n-1)/4\rfloor}^{\infty}(3/4)^k
=8(3/4)^{\lfloor(n-1)/4\rfloor},$
which proves \eqref{eq:rigidity}.
\end{proof}

\subsection{Application to Conjecture 1 of \cite{FK26}}\label{sec:FK}

We show how Theorem~\ref{thm:rigidity} results in a quantitative form of the difference-multiset conjecture of Filaseta and Kalogirou \cite{FK26}*{Conjecture 1}. For a subset $A \subset \{0,1,\ldots , n\}$ we denote by $A - A$ the set of differences as a multiset. As explained in \cite{FK26}*{Section 5}, the Odlyzko-Poonen conjecture implies Conjecture 1 from \cite{FK26}. We show that Theorem~\ref{thm:rigidity} implies a strong error rate for the latter conjecture. For simplicity, we formulate our claim in comparison to \cite{FK26} with the additional requirement that $n \in A$. An analogous rate for the precise formulation of Conjecture 1 from \cite{FK26} can easily be deduced.

\begin{corollary}
    Denote by \(M_n=|\{A-A:A\subseteq\{0,\ldots,n\},\ 0,n\in A\}|\) the number of difference multisets. Then it holds that \[M_n=2^{n-2}+O(12^{n/4}).\]
\end{corollary}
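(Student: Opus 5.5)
We identify subsets $A\subseteq\{0,\ldots,n\}$ with $0,n\in A$ with polynomials $P_A=\sum_{j\in A}x^j\in\mathcal P_n$; this is a bijection onto $\mathcal P_n$, so there are $2^{n-1}$ such sets. The key observation is that the multiset $A-A$ is encoded by the aperiodic autocorrelations. For $0\le k\le n$, the multiplicity of $k$ in $A-A$ equals the multiplicity of $-k$, and both equal $r_k=\sum_j p_jp_{j+k}$, which is the coefficient of $x^{n-k}$ in $P_AP_A^*$. Hence $A-A=B-B$ if and only if $P_AP_A^*=P_BP_B^*$. We therefore plan to count the equivalence classes of $\mathcal P_n$ under the relation $P\sim Q$ iff $PP^*=QQ^*$.

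Each class contains $P$ and $P^*$. Call $P$ \emph{good} if its class is exactly $\{P,P^*\}$, and \emph{bad} otherwise. If $P$ is bad, there is some $Q\in\mathcal P_n\setminus\{P,P^*\}$ with $QQ^*=PP^*$. In particular $QQ^*\equiv PP^*\pmod 4$, so by Theorem~\ref{thm:rigidity} the number of bad polynomials is at most $2^{n-1}\cdot 8(3/4)^{\lfloor (n-1)/4\rfloor}=O(2^n(3/4)^{n/4})=O(2^n\cdot 3^{n/4}/4^{n/4})=O(12^{n/4})$, using $2^n/4^{n/4}=2^{n/2}=4^{n/4}$ and $4\cdot 3=12$. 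Since the relation $\sim$ is an equivalence relation, a class containing a bad element consists entirely of bad elements. Therefore the good polynomials form whole classes $\{P,P^*\}$, and the bad ones form the remaining classes.

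We count the good classes first. Let $R$ be the number of reciprocal polynomials $P=P^*$ in $\mathcal P_n$. Such a $P$ is determined by $p_1,\ldots,p_{\lfloor n/2\rfloor}$, so $R\le 2^{n/2}=O(12^{n/4})$. The good classes have size $2$, except reciprocal good ones, which have size $1$. So the number of good classes is $(\#\text{good}-R_{\text{good}})/2+R_{\text{good}}=2^{n-2}+O(12^{n/4})$, since $\#\text{good}=2^{n-1}-O(12^{n/4})$ and $R_{\text{good}}\le R$. The number of bad classes lies between $0$ and $\#\text{bad}=O(12^{n/4})$. Adding the two counts gives $M_n=2^{n-2}+O(12^{n/4})$.

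The only real input is Theorem~\ref{thm:rigidity}, and the rest is bookkeeping. The one step that needs care is the identification of the multiset $A-A$ with the coefficients of $P_AP_A^*$: the multiset contains $0$ with multiplicity $|A|=r_0$, and it is symmetric under $k\mapsto -k$, so the vector $(r_0,\ldots,r_n)$ determines it and conversely. I expect no genuine obstacle beyond this.
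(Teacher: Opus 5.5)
Your proof is correct and follows the paper's argument: you identify $A$ with $P_A$, use Theorem~\ref{thm:rigidity} to show that all but $O(12^{n/4})$ sets are determined by $P_AP_A^*$ up to reversal, and count classes of size at most two. The only cosmetic difference is in the upper bound: the paper gets $M_n\le 2^{n-2}+2^{\lfloor n/2\rfloor-1}$ directly by counting reflection orbits, with no rigidity needed there, whereas you bound the number of bad classes by the number of bad polynomials.
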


\begin{proof} Associate each \(A\) with \(P_A(x)=\sum_{a\in A}x^a\). Equality of difference multisets is equivalent to \(P_AP_A^*=P_BP_B^*\). Setting \(\varepsilon_n=8(3/4)^{\lfloor(n-1)/4\rfloor}\), Theorem~\ref{thm:rigidity} shows that at least \(2^{n-1}(1-\varepsilon_n)\) sets \(A\) are determined by \(A-A\) up to reflection. These sets therefore yield at least \(2^{n-2}(1-\varepsilon_n)\) distinct difference multisets. Conversely, reflection preserves the difference multiset and exactly \(2^{\lfloor n/2\rfloor}\) sets are fixed by reflection. Hence \(2^{n-2}(1-\varepsilon_n)\le M_n\le \frac{2^{n-1} + 2^{\lfloor n/2\rfloor}}{2}= 2^{n-2}+2^{\lfloor n/2\rfloor-1}\), which proves the estimate as $2^{n-2}\varepsilon_n = O(12^{n/4})$ and $2^{\lfloor n/2\rfloor-1} = O(12^{n/4})$. 
\end{proof}

%% file: sections/reciprocal-factors-and-sharp-rate.tex
\section{Proof of Theorem~\ref{thm:main} and Theorem~\ref{thm:IrredFac}}

Our first lemma follows from \cite{Fil99}*{Lemma 1} and \cite{Fil99}*{Lemma 3}. We include a proof for completeness.

\begin{lemma}\label{lem:flip}
If $P\in\cP_n$ and $P=AB$ with $A,B\in\Z[x]$ monic non-constant polynomials, then it holds that $A(0)=B(0)=1$ and that the twin polynomial $Q=AB^*$ is in $\cP_n$. We also have the twin factorization $QQ^* = PP^*$.

Moreover, if neither factor $A$ nor $B$ is reciprocal, then the twin factorization is non-trivial, that is $Q$ is different from $P$ and $P^{*}$. More precisely, $Q \in \{ P,P^{*}\}$ if and only if either $B = B^{*}$ or $A = A^{*}$. 
\end{lemma}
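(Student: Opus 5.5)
The plan is to derive everything from the identity $QQ^*=PP^*$ together with a comparison of two coefficient sums. Write $P=AB$ with $A,B$ monic, $\deg A=d\ge1$, $\deg B=e\ge1$, $d+e=n$.

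\emph{Constant terms and the twin factorization.} Since $P(0)=1=A(0)B(0)$ with $A(0),B(0)\in\Z$, both are units, so $A(0)=B(0)\in\{\pm1\}$; the sign is settled at the end of the argument. Because $A(0),B(0)\neq0$, reversal preserves degrees: $\deg A^*=d$ and $\deg B^*=e$. Moreover $(A^*)^*=A$, $(B^*)^*=B$, and $(FG)^*=F^*G^*$. Hence $Q=AB^*$ has degree $n$, $Q^*=A^*B$, and
\[
QQ^*=AB^*A^*B=(AB)(A^*B^*)=PP^*.
\]
The leading coefficient of $Q$ is $1\cdot B(0)=B(0)$, and its constant term is $A(0)\cdot1=A(0)$.

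\emph{Coefficients of $Q$.} This is the key step, and I expect it to be the only real idea. Write $Q=\sum_j q_jx^j$ and $P=\sum_j p_jx^j$. The coefficient of $x^n$ in $PP^*$ is $\sum_j p_j^2$, and the coefficient of $x^n$ in $QQ^*$ is $\sum_j q_j^2$, so these two sums are equal. Since $p_j\in\{0,1\}$, we have $\sum_j p_j^2=\sum_j p_j=P(1)$. On the other hand,
\[
Q(1)=A(1)B^*(1)=A(1)B(1)=P(1).
\]
Combining these gives $\sum_j (q_j^2-q_j)=0$. Each term $q_j^2-q_j$ is $\ge0$ for integers $q_j$, so every term vanishes and $q_j\in\{0,1\}$ for all $j$. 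In particular the leading coefficient $B(0)$ and the constant term $A(0)$ of $Q$ are nonzero elements of $\{0,1\}$, so $A(0)=B(0)=1$. Therefore $Q$ is monic of degree $n$ with constant term $1$ and $0$--$1$ coefficients, that is, $Q\in\cP_n$.

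\emph{Non-triviality.} Since $\Z[x]$ is an integral domain and $A,B\neq0$, we can cancel factors. First, $Q=P$ means $AB^*=AB$, which holds if and only if $B^*=B$. Second, $Q=P^*$ means $AB^*=A^*B^*$, which holds if and only if $A=A^*$. Hence $Q\in\{P,P^*\}$ exactly when $A$ or $B$ is reciprocal, which gives both the equivalence and the non-triviality claim.
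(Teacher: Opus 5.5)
Your proof is correct and follows the paper's argument: the twin identity $QQ^*=PP^*$, then comparing the coefficient of $x^n$ with $Q(1)=P(1)$ to force $\sum_j q_j(q_j-1)=0$, and finally cancellation in $\Z[x]$ for the equivalence. The only difference is how you get $A(0)=B(0)=1$: you read it off at the end, since the leading coefficient $B(0)$ and the constant term $A(0)$ of $Q$ must lie in $\{0,1\}$, whereas the paper rules out $A(0)=B(0)=-1$ beforehand by a positivity and intermediate-value argument; your route is purely algebraic and equally valid.
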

\begin{proof}
The integers $A(0)$ and $B(0)$ multiply to one, so they are either
both $1$ or both $-1$. In the case that both are $-1$, it holds that $A(0)<0$, while the polynomial being monic
gives $A(t)>0$ for sufficiently large $t$. By continuity, $A$, and
hence $P$, would have a positive root, contrary to $P(t)>0$ for
$t\ge0$. Thus $A(0)=B(0)=1$. It follows that $Q=AB^*$ is a monic integer polynomial of degree $n$
with constant coefficient one. 

Since reversal respects products and
$(B^*)^*=B$, we have by commutativity of polynomial multiplication
\[
Q^*=A^*B,\qquad
QQ^*=(AB^*)(A^*B)=(AB)(A^*B^*)=PP^*.
\]

Write $P=\sum_{i=0}^n p_i x^i$ and $Q=\sum_{i=0}^n q_i x^i$. As $B^*(1)=B(1)$ we have $Q(1)=P(1)$ and therefore  $\sum_{i=0}^n p_i
=\sum_{i=0}^n q_i $. Looking at the coefficient of $x^n$ of $QQ^*=PP^*$, we have $\sum_{i=0}^n p_i^2
=\sum_{i=0}^n q_i^2$. As moreover $p_i^2=p_i$ it follows that
\[
\sum_{i=0}^n q_i^2
=\sum_{i=0}^n p_i^2
=\sum_{i=0}^n p_i
=\sum_{i=0}^n q_i,
\qquad\text{hence}\qquad
\sum_{i=0}^n q_i(q_i-1)=0.
\]
Each summand is nonnegative because $q_i$ is an integer, and vanishes
only for $q_i\in\{0,1\}$. Thus every coefficient of $Q$ is zero or one,
so $Q\in\cP_n$.

Finally, canceling $A$ in $AB^*=AB$, and $B^*$ in $AB^*=A^*B^*$,
gives, respectively,
\[
Q=P\Longleftrightarrow B=B^*,\qquad
Q=P^*\Longleftrightarrow A=A^*,
\]
concluding the proof.
\end{proof}

We establish the following reciprocal-factor estimate by combining the separation argument of [BV19, proof of Lemma 40] with conditioning on the sums of opposite coefficients.

\begin{proposition}\label{lem:reciprocal}
There exist constants $c,C>0$ such that, for every
$n\ge1$,
\[
\Prob\left(
\begin{array}{c}
P_n\text{ has a monic reciprocal factor}\\
\text{that is not a product of cyclotomic polynomials}
\end{array}\right)
\le C\exp\!\left(-\frac{cn}{(\log n)^4}\right).
\]
\end{proposition}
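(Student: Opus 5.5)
I would reduce to bounding the probability that $P_n$ shares a root with a reciprocal noncyclotomic irreducible polynomial, and I would exploit the fact that such a polynomial has a root off the unit circle in reciprocal pairs. Let $R$ be a monic irreducible reciprocal noncyclotomic factor of $P_n$. Since $R$ is not cyclotomic, Kronecker's theorem gives a root $\beta$ of $R$ with $|\beta|>1$. Because $R=R^*$, the number $\beta^{-1}$ is also a root of $R$, hence of $P_n$. Thus
\[
P_n(\beta)=0=P_n^*(\beta),
\]
that is, $\beta$ is a common root of $P_n$ and $P_n^*$, and hence of $P_n-P_n^*$ and of $P_n+P_n^*$. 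Here the conditioning on opposite sums enters. Write $c_i=p_i+p_{n-i}\in\{0,1,2\}$ and $\delta_i=p_i-p_{n-i}$. Then $P_n+P_n^*$ depends only on $c$, and $P_n-P_n^*$ only on the signs $\delta_i\in\{\pm1\}$ at the indices with $c_i=1$. Conditional on $c$, these signs are independent fair. With high probability about $n/4$ indices $i\le n/2$ have $c_i=1$; exceptions are exponentially rare by Chernoff.

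Next I would follow the separation argument of \cite{BV19}*{Lemma 40}, which handles roots $\beta$ bounded away from the unit circle. If $|\beta|\ge 1+\epsilon$, then the equation $P_n(\beta^{-1})=0$ with $|\beta^{-1}|<1$ forces, after fixing the low coefficients, strong constraints. More concretely, the roots of a $\{0,1\}$ polynomial with constant term $1$ lie in $|z|>1/2$, and Mahler measure bounds control $\beta$. I would split by Mahler measure $M(R)$, using Dobrowolski's or Voutier's bound $\log|\beta|\gg (\log\log D/\log D)^3/D$ for $D=\deg R$, combined with $\log M(P_n)\le \tfrac12\log(n+1)$.

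There are two regimes. \emph{Roots near the circle}: $1<|\beta|\le 1+\epsilon$ with $\epsilon\approx(\log n)^{-3}/n$ up to logs. Here I would use the standard argument that a fixed algebraic $\beta$ is a root of $P_n-P_n^*=\sum_i \delta_i(x^i-x^{n-i})$ with conditional probability at most $2^{-\#\{i: c_i=1\}}$ times a counting factor. To make this union bound feasible, I would discretize: pick a net of candidate $\beta$ of size $\exp(O(n/(\log n)^4))$ and use a Littlewood–Offord/Halász anticoncentration for $\sum\delta_i(\beta^i-\beta^{n-i})$ landing in a tiny ball. Requiring the coefficients $\beta^i-\beta^{n-i}$ to be non-degenerate for a positive proportion of $i$ holds because $|\beta|>1$ and $i<n/2$ give $|\beta^{n-i}|>|\beta^i|$. \emph{Roots far from the circle}: $|\beta|>1+\epsilon$. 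Here the geometric decay of $\beta^{-i}$ lets the first $O(\log n/\epsilon)$ signs essentially determine $P_n(\beta^{-1})$. A more direct approach is to reveal the signs $\delta_i$ one by one and show that each fresh sign changes the quantity by more than the tail, giving probability $2^{-\Omega(\#)}$.

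The hardest step is the near-circle regime. There Dobrowolski's bound gives $\log|\beta|$ only of size $(\log n)^{-3}/n$ up to loglog factors, which is exactly where the $(\log n)^4$ loss in the exponent arises. I would aim to show that the number of distinct possible reciprocal factors (bounded via bounded Mahler measure and degree $\le n$) times the conditional root probability $\exp(-cn\,\log|\beta|\cdot n/\log n)$ is summable. I expect the precise balance of the net size against anticoncentration to be the main technical obstacle.
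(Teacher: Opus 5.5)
There is a genuine gap, and it sits exactly where you place the ``main technical obstacle''. You condition on the opposite sums $c_i$, that is, on $S=P_n+P_n^*$, but you never use the consequence that makes this conditioning useful. If $H$ is a monic reciprocal divisor of $P_n$ and $J$ is a noncyclotomic irreducible factor of $H$, then $J$ divides $P_n$ and, since $H=H^*$, also $P_n^*$. Hence $J\mid S$. After conditioning, $J$ is one of the at most $n$ irreducible factors of a \emph{fixed} polynomial, so no net and no count of candidate factors is needed.

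This step is indispensable. Your final union bound over reciprocal factors of degree $\le n$ and Mahler measure $\le\sqrt{n+1}$ ranges over at least $2^{\lfloor n/2\rfloor}$ polynomials (the reciprocal elements of $\cP_n$ alone). No per-factor bound of size $e^{-cn/(\log n)^4}$ can absorb that. Working with factors of $S$ also covers reciprocal divisors such as $JJ^*$ with $J$ nonreciprocal, which your reduction to an irreducible reciprocal $R$ omits.

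Second, even for a fixed $J$ you have no mechanism that yields $\Prob(J\text{ divides }P_n\mid S)\le 2^{-cn/(\log n)^4}$. Working with a single root $\beta$ loses a factor $n$. The house bound only gives $\log|\beta|\gg(\log n)^{-3}/n$, so in the worst case $|\beta|^{\pm i}=1+O((\log n)^{-3})$ for all $i\le n$. Consequently:
\begin{itemize}
\item there is no geometric decay to exploit;
\item the ``fresh sign beats the tail'' argument needs roughly $|\beta|>2$;
\item Littlewood--Offord/Hal\'asz bounds give polynomial rather than exponential decay unless you supply strong arithmetic information about the coefficients $\beta^i-\beta^{n-i}$, which you do not.
\end{itemize}

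The paper instead uses all conjugates at once through $M(J)$, in Lemma~\ref{lem:BV-separation}. Choose a prime $q\in(T,2T]$, with $T=K(\log n)^4$, such that the $q$th powers of the roots of $J$ are distinct. If $J$ divided a nonzero $\{-1,0,1\}$-polynomial $D=x^aV(x^q)$ of degree $\le n$, then $M(J)^q\le M(V)=M(D)\le\|D\|_2\le\sqrt{n+1}$. This contradicts Dobrowolski's bound $\log M(J)\gg(\log n)^{-3}$, which carries no $1/n$ loss.

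Now take the residue class $a_q$ with $2a_q\equiv n\pmod q$; it is stable under $i\mapsto n-i$. Fix all pairs outside $I_q=\{i<n/2:\ i\equiv a_q \pmod q,\ c_i=1\}$. Any two sign patterns on $I_q$ then differ by such a $D$, so at most one of the $2^{|I_q|}$ patterns gives $J\mid P_n$. A Chernoff bound ensures $|I_q|\ge n/(32T)$ for every such $q$ outside an event of probability $O(e^{-cn/(\log n)^4})$. A union bound over the $\le n$ irreducible factors of $S$ then concludes.
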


To prove Proposition~\ref{lem:reciprocal}, we state a version of the separation argument in
\cite{BV19}*{proof of Lemma 40}. For a nonzero polynomial $F$,
write $M(F)$ for its Mahler measure.

\begin{lemma}\label{lem:BV-separation}
There is an absolute constant $K>0$ such that, for all
sufficiently large $n$, the following holds with
$T=K(\log n)^4$. For every monic noncyclotomic irreducible
polynomial $J\in\Z[x]$ with $J\ne x$ and $\deg J\le n$
there is a prime $q\in(T,2T]$ such that for every
$a\in\{0,\ldots,q-1\}$ no nonzero polynomial of the form
\[
D(x)=\sum_{\substack{0\le i\le n\\ i\equiv a\,\,\mathrm{mod} \,\, q}} d_i x^i
\qquad\text{with}\qquad d_i\in\{-1,0,1\},
\]
is divisible by $J$.
\end{lemma}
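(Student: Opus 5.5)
The idea is to use that $J$ is noncyclotomic, so some root is far from the unit circle in an arithmetic sense, and then pick a prime $q$ for which every sparse $\{-1,0,1\}$ polynomial supported on one residue class mod $q$ is too ``lacunary'' to vanish at all conjugates of a root of $J$. Concretely, a polynomial $D$ supported on $i\equiv a \pmod q$ has the form $D(x)=x^a E(x^q)$ with $E$ having coefficients in $\{-1,0,1\}$ and $\deg E\le n/q$. Since $J\ne x$ is irreducible, $J\mid D$ is equivalent to $J\mid E(x^q)$. So we must show $E(\alpha^q)\neq0$ for a root $\alpha$ of $J$, i.e.\ that $\alpha^q$ is not a root of any nonzero $\{-1,0,1\}$ polynomial of degree $\le n/q$.

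First I would use Dobrowolski-type bounds. For noncyclotomic monic irreducible $J$ of degree $\delta\le n$, the Mahler measure satisfies $\log M(J)\gg (\log\log\delta/\log\delta)^3$, and the standard Dobrowolski argument works modulo primes $q$. In the case $q\nmid \mathrm{disc}$ and $\alpha^q$ not conjugate to $\alpha$ up to roots of unity, the minimal polynomial $J_q$ of $\alpha^q$ has degree $\deg J$ and $M(J_q)=M(J)^q$, since $M$ is multiplicative along $\alpha\mapsto\alpha^q$ when the degree is preserved. Then if $J_q\mid E$ with $E$ a $\{-1,0,1\}$ polynomial, Landau's inequality $M(J_q)\le M(E)\le \|E\|_2\le \sqrt{n/q+1}$ gives $q\log M(J)\le \tfrac12\log(n+1)$. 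Choosing $q>T=K(\log n)^4$ together with $\log M(J)\gg(\log n)^{-3}$ (crudely, absorbing the $\log\log$) makes $q\log M(J)\gg K\log n$, contradicting this for $K$ large.

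It remains to find a prime $q\in(T,2T]$ for which the degree of $\alpha^q$ does not drop. Degree drop for $\alpha^q$ happens when $\alpha^q$ lies in a proper subfield, or when some conjugate satisfies $\alpha_i^q=\zeta\alpha_j^q$. By Dobrowolski's lemma (the $\alpha^p$ versus $\alpha_i$ congruence argument), for a noncyclotomic $\alpha$ the primes $q$ for which $\alpha^q$ and some $\alpha_i^q$ coincide up to a root of unity are few. The relation $\alpha_i/\alpha_j$ being a root of unity of order dividing $q$ forces, for $q$ larger than the number of roots of unity in the splitting field, a relation already present for $q=1$. I would handle this by replacing $J$ with the minimal polynomial of $\alpha^{N}$ for a bounded $N$ that kills torsion ratios, or more simply by noting that if the $\alpha_i^q$ are not distinct then $\alpha_i/\alpha_j$ is a root of unity of order $q$. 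That requires $q-1\le[\Q(\alpha_i/\alpha_j):\Q]\le n^2$, which is possible, so this needs care.

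This degree-drop issue is the main obstacle. I would follow Breuillard–Varjú's Lemma 40 here. Their argument counts the primes $q\in(T,2T]$ that fail, each failure giving a root-of-unity ratio of order $q$ among conjugates. Since all such ratios lie in a field of degree $\le n^2$ and have distinct prime orders, their product has order $\prod q\le n^2$-bounded-degree, so at most $O(\log n/\log T)$ primes fail. This is far fewer than the $\gg T/\log T$ primes in $(T,2T]$, so a good $q$ exists. Even if a mild degree drop occurs, $M(\alpha^q)$ only changes by a bounded power, which still suffices.
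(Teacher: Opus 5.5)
Your reduction is the same as the paper's: write $D=x^aE(x^q)$, so that $E$ vanishes at every $\alpha_i^q$. If these values are distinct, then $M(J)^q\le M(E)\le\|E\|_2$, which contradicts Dobrowolski's bound once $K$ is large. You also correctly flag that everything hinges on finding a prime $q\in(T,2T]$ with $\alpha_1^q,\dots,\alpha_h^q$ distinct. That step has a genuine gap, because your counting argument rests on a false claim.

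For different bad primes $q$, the collision $\alpha_i^q=\alpha_j^q$ comes from different pairs $(i,j)$. So the roots of unity $\alpha_i/\alpha_j$ lie in different fields $\mathbb{Q}(\alpha_i,\alpha_j)$, and they do not all lie in one field of degree $\le n^2$. Their only evident common field is the splitting field of $J$, of degree up to $h!$. The bound $\prod(q-1)\le h!$ then allows roughly $h\log h/\log T$ bad primes, far more than the $\asymp T/\log T$ primes in $(T,2T]$ when $h$ is comparable to $n$.

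Your fallback also fails. If $d=[\mathbb{Q}(\alpha):\mathbb{Q}(\alpha^q)]>1$, the minimal polynomial of $\alpha^q$ has Mahler measure $M(J)^{q/d}$. Since $d$ can equal $q$, the gain in the exponent can disappear entirely.

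The paper closes this step by citing \cite{BV19}*{Lemma 26} for large $h$, which applies since $T\ge 4\log h$. For bounded $h$ it uses your own observation that $q>h^2+1$ forces distinctness.

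A self-contained repair works inside $\mathbb{Q}(\alpha)$ rather than the splitting field. Put $F=\mathbb{Q}(\alpha^q)$. The minimal polynomial of $\alpha$ over $F$ divides $x^q-\alpha^q$, so $d\le q$, and its roots are $\zeta^i\alpha$ for $i\in S$, $|S|=d$, with $\zeta$ a primitive $q$th root of unity.
\begin{itemize}
\item If $d=q$, then $q\mid h$.
\item If $1<d<q$, the constant term $\pm\zeta^s\alpha^d$ (with $s=\sum_{i\in S}i$) lies in $F$. Because $\gcd(d,q)=1$, having $\zeta^s=1$ would give $\alpha\in F$, i.e.\ $d=1$. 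Hence $\zeta^s\in\mathbb{Q}(\alpha)$ is a primitive $q$th root of unity.
\end{itemize}
Distinct bad primes of the first kind satisfy $\prod q\mid h$. Those of the second kind satisfy $\mathbb{Q}(\zeta_{\prod q})\subseteq\mathbb{Q}(\alpha)$, hence $\prod(q-1)\mid h$. So at most about $2\log h/\log T$ primes in $(T,2T]$ are bad. This is fewer than the number of primes in that interval once $T\ge C\log h$, which certainly holds for $T=K(\log n)^4$ and $h\le n$.
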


\begin{proof}
Let $\alpha_1,\ldots,\alpha_h$ be the roots of $J$.
For sufficiently large $h$, since $T\ge 4\log h$,
\cite{BV19}*{Lemma 26} gives a prime $q\in(T,2T]$
such that 
$\alpha_1^q,\ldots,\alpha_h^q$ are distinct.
For the remaining bounded values of $h$, choose any prime
$q\in(T,2T]$ by Bertrand's postulate. For sufficiently large
$n$, we have $q>h^2+1$, which ensures distinctness as
the field $\Q(\alpha_i,\alpha_j)$ has degree at most $h^2$
and cannot contain a primitive $q$th root of unity. So we conclude that in any case $\alpha_1^q,\ldots,\alpha_h^q$ are distinct.

Recall that Dobrowolski's bound \cite{Dob79} gives
$\log M(J)\gg(\log n)^{-3}$, and hence
$\log(M(J)^q)=q\log M(J)\ge cK\log n$.
Taking $K$ so that $cK>1/2$, we obtain
$M(J)^q>\sqrt{n+1}$ for sufficiently large $n$. Suppose that $J\mid D$ for a polynomial as in the statement.
Then we can express $D$ as $D(x)=x^aV(x^q)$ with
$V\in\Z[x]$. Since $J\ne x$, the polynomial $V$ vanishes at
the distinct numbers $\alpha_1^q,\ldots,\alpha_h^q$. Hence
\[
\sqrt{n+1}<M(J)^q
\le M(V)=M(D)\le\|D\|_2\le\sqrt{n+1},
\]
a contradiction.
\end{proof}

\begin{proof}(of Proposition~\ref{lem:reciprocal})
It suffices to consider sufficiently large $n$, increasing
$C$ to cover the remaining cases. Put $T=K(\log n)^4$,
with $K$ as in Lemma~\ref{lem:BV-separation}, and write
\[
P_n(x)=\sum_{i=0}^n p_i x^i,\qquad
m=\left\lfloor\frac{n-1}{2}\right\rfloor,\qquad
S=P_n+P_n^*.
\]
Revealing $S$ determines $s_i=p_i+p_{n-i}$ for $1\le i\le m$.
Conditional on $S$, the pairs with $s_i=1$ independently
take the values $(0,1)$ and $(1,0)$, each with probability
$1/2$ and all other coefficients are determined. The idea of the proof is to condition on $S$, which fixes its irreducible
factors while preserving independent choices of opposite
coefficient pairs. We then use this remaining randomness and
Lemma~\ref{lem:BV-separation} to bound the probability that
any noncyclotomic monic irreducible factor of $S$ divides $P_n$.

For each prime $q\in(T,2T]$, choose
$a_q\in\{0,\ldots,q-1\}$ with $2a_q\equiv n\,\, \mathrm{mod} \,\, q $, and set
\[
I_q=\{1\le i\le m:i\equiv a_q \,\, \mathrm{mod} \,\, q \text{  and } s_i=1\}.
\]
Let $N_q$ be the number of indices $1\le i\le m$ satisfying
$i\equiv a_q\pmod q$. For sufficiently large $n$, we have
$N_q\ge n/(8T)$. Before conditioning on $S$, the corresponding
indicators $\mathbf 1_{\{s_i=1\}}$ are independent Bernoulli
variables of parameter $1/2$, since they consist of disjoint
coefficient pairs. Thus $|I_q|\sim\operatorname{Bin}(N_q,1/2)$,
and the lower-tail Chernoff bound gives
\[
\Prob\!\left(|I_q|<\frac{n}{32T}\right)
\le \Prob\!\left(|I_q|<\frac{N_q}{4}\right)
\le \exp\!\left(-\frac{N_q}{16}\right)
\le \exp\!\left(-\frac{n}{128T}\right).
\]
Thus, a union bound over the at most $2T$ primes
in $(T,2T]$ shows that the $S$-measurable event
\[
\mathcal G=
\{|I_q|\ge n/(32T)\text{ for every prime }q\in(T,2T]\}
\]
satisfies
\[
\Prob(\mathcal G^c)\le 2T\exp\!\left(-\frac{n}{128T}\right).
\]

Fix a value of $S$ for which $\mathcal G$ holds.
Since $\deg S=n$ and $S(0)=2$, there are at most $n$
monic irreducible divisors of $S$, none equal to $x$.
For each noncyclotomic irreducible factor $J$, choose $q$ by
Lemma~\ref{lem:BV-separation} and further condition on all pairs $(p_i, p_{n-i})$
with $i$ outside $I_q$. Note that the pairs $(p_i, p_{n-i})$ with $i \in I_q$ after fixing $S$ are all either $(1,0)$ or $(0,1)$. With the pairs outside $I_q$ fixed, at most one assignment of the
pairs inside $I_q$ can give $J\mid P_n$. Indeed, the difference of two
distinct assignments would be a nonzero polynomial divisible
by $J$, with coefficients in $\{-1,0,1\}$ and all nonzero
coefficients indexed by the residue class $a_q$ modulo $q$,
since $n-i\equiv i\equiv a_q\,\, \mathrm{mod}\,\, q$.
This contradicts Lemma~\ref{lem:BV-separation}.
Therefore,
\[
\Prob\bigl(J\text{ divides }P_n\mid S\bigr)
\le 2^{-|I_q|}\le 2^{-n/(32T)}.
\]

Every monic reciprocal divisor $H$ of $P_n$ also divides
$P_n^*$ and hence divides $S$. Write $H=J_1\cdots J_r$ with each
$J_j\in\mathbb Z[x]$ monic and irreducible. If $H$ is not a
product of cyclotomic polynomials, some $J_j$ is noncyclotomic
and divides both $S$ and $P_n$. As we condition on $S$, the polynomials that factor $S$ are determined. Therefore, conditional on $S$ satisfying
$\mathcal G$, a union bound over the at most $n$ such
irreducible divisors of $S$ gives probability at most
$n2^{-n/(32T)}$. Averaging over $S$ therefore bounds the
desired probability by
\[
\Prob(\mathcal G^c)+n2^{-n/(32T)}
\le C\exp\!\left(-\frac{cn}{(\log n)^4}\right).
\] This concludes the proof.
\end{proof}

Having established all these ingredients, we first prove Theorem~\ref{thm:main}.

\begin{proof}[Proof of Theorem~\ref{thm:main}]
By Gauss's lemma, if $P_n$ is reducible over $\mathbb{Q}$ it is
reducible over $\mathbb{Z}$ too. So Lemma~\ref{lem:flip} shows
that a reducible $P_n$ either has a non-trivial twin factorization
or a nonconstant monic reciprocal divisor. Hence Theorem~\ref{thm:rigidity} and
Proposition~\ref{lem:reciprocal} give, for every $A>0$,
\begin{equation}\label{eq:noncyclotomic}
\Prob(P_n\text{ reducible and without a cyclotomic divisor})
=O_A(n^{-A}).
\end{equation}

It remains to deal with the occurrence of a cyclotomic factor. Let $\Phi_k$ be the $k$th cyclotomic polynomial with $\deg \Phi_k = \varphi(k)$ for $\varphi$ Euler's totient function that counts integers $1\leq j \leq k$ with $\gcd(k,j) = 1$. The unconditional
cyclotomic estimate in \cite{BV19}*{Lemma 45 and Section 9.2} gives
\begin{equation}\label{eq:cyclotomic}
\Prob\bigl(\Phi_k\mid P_n\text{ for some }\varphi(k)\ge2\bigr)
=O(n^{-1}).
\end{equation}
The only remaining cyclotomic factors are $x-1$, which cannot divide
$P_n$ as $P_n(1) > 0$, and $x+1$. Thus, for $n\geq 2$, equations \eqref{eq:noncyclotomic}--\eqref{eq:cyclotomic} imply 
\[
0\le\Prob(P_n\text{ reducible})-\Prob(P_n(-1)=0)\ll n^{-1}.
\]

Note that for $m \geq 1$, we have $\Prob(P_{2m+1}(-1)=0)=2^{-2m}\binom{2m}{m}$ and $\Prob(P_{2m}(-1)=0)=2^{-(2m-1)}\binom{2m-1}{m+1}.$
Stirling's formula yields
$\Prob(P_n(-1)=0)=\sqrt{2/(\pi n)}+O(n^{-3/2})$,
completing the proof.
\end{proof}

\begin{remark}\label{rm:higherorderexpansion}
The argument of the proof of Theorem~\ref{thm:main} can be refined to obtain the asymptotic expansion of the probability of reducibility to any fixed order. The resulting coefficients may depend on the residue class of \(n\) modulo an integer depending on the desired order. Indeed, \eqref{eq:noncyclotomic} and the cyclotomic estimates in [BV19, Section 9.2] reduce the problem to finitely many cyclotomic divisibility events that one can combine with the inclusion–exclusion principle. For example, it can be shown that 
    \begin{align*}
        \Prob(P_n\text{ is reducible over }\Q)
=\frac{A}{n^{1/2}} + \frac{B}{n} + \frac{A(\delta_n - 2B)}{n^{3/2}} + O\left(\frac{1}{n^2}\right)
    \end{align*} for $A = \sqrt{2/\pi}$, $B = \frac{4(1 + \sqrt{3})}{\pi}$ and $\delta_n =\{ \begin{smallmatrix}
        -17/4, &\, n \text{ even} \\
1/4, &\, n \text{ odd}
    \end{smallmatrix}.$

\end{remark}

We conclude the paper with the proof of Theorem~\ref{thm:IrredFac}, for which we use the following lemma dealing with high degree divisors.

\begin{lemma}\label{lem:reciprocal-degree}
For $P\in\Z[x]$, let $\bar P\in\mathbb F_2[x]$ denote its
coefficientwise reduction modulo two. For every $n\ge1$ and
integer $L\ge1$,
\begin{equation}\label{eq:reciprocal-tail}
\Prob\bigl(\deg\gcd(\bar P_n,\bar P_n^*)\ge L\bigr)
\le 8\,2^{-L/2},
\end{equation}
where the greatest common divisor is taken in $\mathbb F_2[x]$.
In particular, the probability that $P_n$ has a monic reciprocal
divisor in $\mathbb{Z}[x]$ of degree at least $L$ is at most $8\,2^{-L/2}$.
\end{lemma}

\begin{proof}
As reversal exchanges the two polynomials whose gcd we are
taking, their monic greatest common divisor is reciprocal.
There are $2^{\lfloor h/2\rfloor}$ monic reciprocal polynomials
of degree $h$ and constant coefficient one over $\mathbb F_2$.

Fix such a polynomial $R$ of degree $h\le n$. A polynomial in
$\mathcal B_n$ divisible by $R$ is uniquely determined by its
coefficients of $x^h,\ldots,x^{n-1}$. Indeed, the difference of
two such polynomials agreeing in these coefficients has degree
less than $h$ and is divisible by $R$, so must vanish. Hence 
$\Prob(R\mid\bar P_n) \le \frac{2^{n-h}}{2^{n-1}}=2^{1-h}.$ Summing over $h\ge L$ gives
\[
\Prob\bigl(\deg\gcd(\bar P_n,\bar P_n^*)\ge L\bigr)
\le \sum_{h=L}^{\infty}2^{\lfloor h/2\rfloor}2^{1-h}
\le \frac{2}{1-2^{-1/2}}\,2^{-L/2}
\le 8\,2^{-L/2}.
\]
Finally, if $H$ is a monic reciprocal divisor of $P_n$, then
$\bar H$ has the same degree and divides both $\bar P_n$ and
$\bar P_n^*$.
\end{proof}

\begin{proof}[Proof of Theorem~\ref{thm:IrredFac}]
Let $\Phi$ be the product of all cyclotomic factors of $P_n$,
counted with multiplicity, and put $I=P_n/\Phi$. By Theorem~\ref{thm:rigidity} and
Proposition~\ref{lem:reciprocal}, outside an event of probability
at most $C\exp(-\frac{cn}{(\log n)^4}),$ the polynomial $P_n$ has neither a non-trivial twin factorization
nor a monic reciprocal divisor that is not a product of
cyclotomic polynomials. Denote this event of high probability
by $\mathcal E$.

On $\mathcal E$, we claim that $I$ is either $1$ or irreducible.
Otherwise, by Gauss's lemma, write $I=AB$ with $A,B\in\Z[x]$
monic and nonconstant. Since $I$ has no cyclotomic factors,
neither $A$ nor $\Phi B$ is a product of cyclotomic polynomials.
Consequently, neither is reciprocal on $\mathcal E$.
Applying Lemma~\ref{lem:flip} to $P_n=A(\Phi B)$ gives a
non-trivial twin factorization, a contradiction.

Since $P_n(1)>0$, the factor $x-1$ does not occur in $\Phi$.
All other cyclotomic polynomials are reciprocal, so $\Phi$ is
reciprocal. Lemma~\ref{lem:reciprocal-degree} therefore gives,
for every integer $1\le L\le n$, that $\Prob(\deg \Phi\ge L)\le 8\,2^{-L/2}.$ On $\mathcal E\cap\{\deg \Phi<L\}$, the polynomial $I$ is
nonconstant and hence irreducible and noncyclotomic. Thus the
required factorization, with $\deg \Phi<L$, holds with probability
at least
\[
1-C\exp\!\left(-\frac{cn}{(\log n)^4}\right)-8\,2^{-L/2}.
\]
Taking $L=n$ proves the first assertion. Taking
$L=\lfloor n^{\alpha}\rfloor+1$ proves the second with suitable constants $c_{\alpha}, C_{\alpha} > 0$ depending on $\alpha$.
\end{proof}